\newtheorem{theorem}{Theorem}
\newtheorem{lem}{Lemma}
\newtheorem{prop}{Proposition}
\author{A.I.~Nazarov, D.M.~Stolyarov, 
P.B.~Zatitskiy\footnote{A.N. was partially supported by grant NSh4210.2010.1. 
D.S. and P.Z. were supported by the Chebyshev Laboratory (Dept. of Mathematics and Mechanics, 
St.Petersburg State University) under the grant 11.G34.31.0026 of the Government of the Russian Federation.}}
\title{On the formula of regularized traces}
\begin{document}
\maketitle

We consider an operator $\mathbb L$ generated by a $2m$-order differential expression
\begin{equation}\label{operator}
ly\equiv(-1)^m D^{2m}y + \sum\limits_{k = 0}^{2m-2} p_k(x) D^ky
\end{equation}
and by $m$ boundary conditions
\begin{equation}\label{cond}
P_j(D)y(0)=0,\qquad j=1,\dots,m.
\end{equation}
Here $p_k\in L_{1,loc}(\mathbb R_+)$ are real functions while $P_j$ is a polynomial of degree $k_j$; 
moreover, the system of boundary conditions is assumed {\it normalized}, i.e. $0\leq k_1 < k_2 < ... < k_m \leq 2m-1$. 

Suppose this operator is self-adjoint in the space $L_2(\mathbb R_+)$, semibounded from below
and has purely discrete spectrum $\{\lambda_n\}_{n=1}^{+\infty}$ enumerated in ascending order according to the 
multiplicity.

Let $\mathbb Q$ be an operator of multiplication by a real function $q\in L_{\infty}(\mathbb R_+)$. Then
an operator $\mathbb L+\mathbb Q$ also has a purely discrete spectrum $\{\mu_n\}_{n=1}^{+\infty}$.\medskip

A number of papers beginning from the pioneering work \cite{L} are devoted to the calculation of spectral functions
and regularized traces of differential operators. The aim of our paper is to prove the following statement.

\begin{theorem}\label{main}
Let $q$ have a bounded support\footnote{This condition is used only to conclude the relation (\ref{Green}) from
Proposition \ref{Kost1}. It seems that this condition may be weakened. To do this one needs to
obtain a global estimate of the difference of spectral functions to operators $\mathbb L$ and $\widetilde{\mathbb L}$. 
However, up to the moment we do not know such an estimate.}, and the function $\psi(x)=\frac 1x\int\limits_0^x q(t)dt$
has a bounded variation at zero. Then the following relation holds:
\begin{equation}\label{prosto}
{\cal S}_1
\equiv \sum\limits_{n=1}^{\infty}\Big[\mu_n-\lambda_n-\frac{c_n}{\pi} \int\limits_{0}^{\infty}q(t) dt\Big]=-\psi(0+)\cdot\Big(\frac{m}{2}-\frac{1}{4}-\frac{\varkappa}{2m}\Big),
\end{equation}
{\it where }
$$c_1 = \lambda_{1}^{\frac{1}{2m}};\qquad c_n = \lambda_{n}^{\frac{1}{2m}}-\lambda_{n-1}^{\frac{1}{2m}},
\quad n>1;\qquad \varkappa=\sum_{j=1}^{m} k_j.$$ 
\end{theorem}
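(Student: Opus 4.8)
\medskip
\noindent\textbf{Proof plan.} Since $q$ is bounded with bounded support, $\mathbb Q(\mathbb L-\lambda)^{-1}$ is of trace class for every $\lambda\notin\{\lambda_n\}$ (multiplication by $q$ localizes to a bounded interval, on which $(\mathbb L-\lambda)^{-1}$ maps $L_2$ into $W_2^{2m}$, whose embedding into $L_2$ belongs to the trace class because $2m\ge2$). Hence the perturbation determinant $D(\lambda)=\det\!\big(I+\mathbb Q(\mathbb L-\lambda)^{-1}\big)$ is meromorphic, with zeros $\{\mu_n\}$ and poles $\{\lambda_n\}$ counted with multiplicity. Taking a suitable sequence of contours $\Gamma_N$ that enclose exactly $\lambda_1,\dots,\lambda_N$ together with $\mu_1,\dots,\mu_N$ (this is possible because $|\mu_n-\lambda_n|\le\|q\|_\infty$ and both spectra accumulate only at $+\infty$), the argument principle followed by an integration by parts yields
\begin{equation*}
\sum_{n=1}^{N}(\mu_n-\lambda_n)=-\frac1{2\pi i}\oint_{\Gamma_N}\log D(\lambda)\,d\lambda .
\end{equation*}
Since $\sum_{n\le N}c_n=\lambda_N^{1/(2m)}$ telescopes, the counterterm in $\mathcal S_1$ equals $\lambda_N^{1/(2m)}\pi^{-1}\!\int_0^\infty q(t)\,dt$, so $\mathcal S_1$ is the limit as $N\to\infty$ of the right-hand side above minus this quantity.

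Expand $\log D=\operatorname{tr}\log(I+\mathbb Q(\mathbb L-\lambda)^{-1})$ in powers of the resolvent (on the part of $\Gamma_N$ away from the spectrum). The linear term is $\operatorname{tr}[\mathbb Q(\mathbb L-\lambda)^{-1}]=\int_0^\infty q(x)G_{\mathbb L}(x,x;\lambda)\,dx$; inserting $G_{\mathbb L}(x,x;\lambda)=\int\frac{d_t\theta_{\mathbb L}(x,x;t)}{t-\lambda}$ and computing the contour integral by residues, this term contributes $\int_0^\infty q(x)\theta_{\mathbb L}(x,x;\lambda_N)\,dx$ --- that is, the first-order perturbation sum $\sum_{n\le N}\int_0^\infty q|\phi_n|^2\,dx$. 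Because the whole-line symbol $(-1)^mD^{2m}$ has spectral function $\theta_0(x,x;t)=t^{1/(2m)}/\pi$, one has $\lambda_N^{1/(2m)}\pi^{-1}\!\int_0^\infty q=\int_0^\infty q(x)\theta_0(x,x;\lambda_N)\,dx$, so the two divergences cancel. One then shows that the quadratic and higher-order terms of $\log D$, as well as the effect of the lower-order coefficients $p_k$ and of the non-leading parts of the polynomials $P_j$, are asymptotically negligible on $\operatorname{supp}q$: this is precisely the content of Proposition~\ref{Kost1} and of relation~(\ref{Green}), and it is here that the bounded support of $q$ is used to pass from a local to a global estimate. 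In the limit $\theta_{\mathbb L}$ may thus be replaced by the spectral function $\widehat\theta$ of the homogeneous model $\widehat{\mathbb L}=(-1)^mD^{2m}$ on $\mathbb R_+$ with the scale-invariant conditions $D^{k_j}y(0)=0$, $j=1,\dots,m$, and
\begin{equation*}
\mathcal S_1=\lim_{\lambda\to+\infty}\int_0^\infty q(x)\Big[\widehat\theta(x,x;\lambda)-\frac{\lambda^{1/(2m)}}{\pi}\Big]dx .
\end{equation*}

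For $\widehat{\mathbb L}$ everything is explicit by homogeneity. Its generalized eigenfunctions at energy $\lambda=k^{2m}$ are linear combinations $\Phi(kx)$ of the exponentials $e^{\omega x}$ with $\omega^{2m}=(-1)^m$, $\operatorname{Re}\omega\le0$, selected by the $m$ conditions $D^{k_j}y(0)=0$; hence $\widehat\theta(x,x;\lambda)=\lambda^{1/(2m)}g\big(x\lambda^{1/(2m)}\big)$ with $g(u)=\frac1{\pi u}\int_0^u|\Phi(s)|^2\,ds$ and $g(+\infty)=1/\pi$. Substituting $u=x\lambda^{1/(2m)}$ and integrating by parts (using $\int_0^x q(t)\,dt=x\psi(x)$ and the bounded variation of $\psi$ at $0$, which supplies $\psi(0+)$ and controls the limit against the oscillatory tail of $g$), one gets
\begin{equation*}
\mathcal S_1=\psi(0+)\int_0^\infty\Big(g(u)-\frac1\pi\Big)du ,
\end{equation*}
the integral in the regularized sense. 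It remains to compute this constant --- directly from $\Phi$, or, more conveniently (the integrand then decaying), by identifying it with the boundary term $\int_0^\infty\big[G_{\widehat{\mathbb L}}(x,x;-1)-G_0(x,x;-1)\big]dx$ of the resolvent (equivalently heat) trace of $\widehat{\mathbb L}$, assembling the Green function of $\widehat{\mathbb L}$ from its $m$ decaying exponentials and the $m$ boundary conditions, and integrating in $x$. The answer depends on the boundary data only through $\varkappa=\sum_{j=1}^m k_j$ and equals $-\big(\tfrac m2-\tfrac14-\tfrac{\varkappa}{2m}\big)$, which is~(\ref{prosto}). (As a check, for $m=1$ with the Dirichlet condition $k_1=0$ one has $\Phi(s)=\sqrt2\,\sin s$ and $G_{\widehat{\mathbb L}}(x,x;-1)-G_0(x,x;-1)=-\tfrac12e^{-2x}$, so the constant is $-\tfrac14$; for the Neumann condition $k_1=1$ it is $+\tfrac14$.)

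\textbf{Main difficulty.} The crux is the last step, the evaluation of the model constant. In all of its natural forms --- $\int_0^\infty(g-1/\pi)\,du$, or $-\tfrac1\pi\int_0^\infty(|\Phi|^2-1)\log u\,du$, or the resolvent/heat boundary coefficient --- the relevant integral converges only conditionally, so one must regularize carefully and keep precise track of the normalization of the generalized eigenfunctions and of the transients contributed by the decaying modes near the boundary; only after this does the clean dependence on $\varkappa$ alone emerge. A secondary technical point is the reduction of the preceding paragraph --- discarding the higher-order terms of $\log D$ and the lower-order data of $\mathbb L$ --- which rests on Proposition~\ref{Kost1} and is exactly where the hypothesis that $q$ has bounded support enters.
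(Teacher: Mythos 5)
Your overall strategy---reduce $\mathcal S_1$ to $\lim_{\lambda\to+\infty}\int_0^\infty q(x)\big(\widehat\theta(x,x;\lambda)-\lambda^{1/(2m)}/\pi\big)\,dx$ for a homogeneous model operator and then extract $\psi(0+)$ times a model constant by scaling---is essentially the route the paper takes (the starting identity is quoted from \cite{KP3}, the passage to the model operator rests on Proposition \ref{Kost1} and Lemma \ref{nocond}, and $\psi(0+)$ is extracted via the Riemann localization principle and the Vall\`ee-Poussin test). However, there is a genuine gap exactly at the point you yourself flag as the main difficulty: the model constant is never computed. The claim that the regularized integral $\int_0^\infty\big(g(u)-\tfrac1\pi\big)du$ (equivalently, your resolvent-trace boundary term) depends on the boundary conditions only through $\varkappa=\sum k_j$ and equals $-\big(\tfrac m2-\tfrac14-\tfrac{\varkappa}{2m}\big)$ is precisely the content of Theorem \ref{main}; verifying the $m=1$ Dirichlet and Neumann cases does not establish it. In the paper this evaluation is the bulk of the work: the explicit formula for $H_0$, the reduction (\ref{nado})--(\ref{nado1}) to the limit matrix $\mathbb B$, the identity (\ref{bb}) showing that $\mathbb B$ is diagonalized by the Vandermonde vectors with eigenvalues $(-1)^{k_j}$, Lemma \ref{evenodd}, and the Abel-summation computation (\ref{sled}) giving ${\bf Sp}({\mathbb P}{\mathbb B})=\tfrac{m(2m-1)}2-\varkappa$. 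Without some substitute for this algebra (or an equally explicit integration of the model Green function for arbitrary normalized $k_1<\dots<k_m$), the conclusion (\ref{prosto}) is asserted, not proved.

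Two secondary points. First, you attribute the replacement of the actual boundary conditions $P_j(D)y(0)=0$ by their principal parts $D^{k_j}y(0)=0$ to Proposition \ref{Kost1} and relation (\ref{Green}); but that proposition only removes the lower-order coefficients $p_k$ of the differential expression and keeps the full boundary polynomials. Discarding the non-leading terms of the $P_j$ requires a separate argument---in the paper, Lemma \ref{nocond} together with the estimate (\ref{BBB}), Jordan's lemma and dominated convergence inside the contour integral---and it is not automatic that the constant is insensitive to the lower-order boundary data. Second, your derivation of the starting identity through the perturbation determinant is only a sketch: the contours $\Gamma_N$ must be shown to separate the paired spectra, and the vanishing of the quadratic and higher terms of $\log D$ along these contours must be proved rather than asserted; the paper sidesteps this by quoting the identity from \cite{KP3}. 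Both of these points are repairable by known techniques, but the missing evaluation of the constant described above is the heart of the theorem and cannot be passed over.
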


{\bf Remark}. Formula (\ref{prosto}) was conjectured by A.I.~Nazarov at the conference in Moscow, 2007, during the
A.S. Pechentsov lecture. For one-term boundary conditions $P_j(D)=D^{k_j}$ this formula was proved in the preprint
\cite{ZNS}\footnote{Three particular cases: 1) $k_j=2j-2$; 2) $k_j=2j-1$; 3) $k_j=j-1$ were considered earlier in
papers \cite{KP1}, \cite{SKP}, \cite{KP3}.}.\medskip

We introduce two auxiliary operators: 

$\widetilde{\mathbb L}$ is a $2m$-order operator with boundary conditions (\ref{cond}), self-adjoint in 
$L_2(\mathbb R_+)$; its lower-order coefficients are compactly supported and coincide with lower-order
coefficients in (\ref{operator}) on the segment $[0,R]\supset{\rm supp}(q)$;

${\mathbb L}_0$ is the operator $(-1)^m D^{2m}$ with boundary conditions (\ref{cond})\footnote{Note that this operator
does not need to be self-adjoint!}.\medskip

For $\lambda\in\mathbb R$ we denote by $\theta(x,y,\lambda)$ the spectral function of the operator $\mathbb L$, i.e.
the kernel of its spectral projector $E_{\lambda}$, see \cite{Go}. In a similar way, 
$\widetilde \theta(x,y,\lambda)$ is the spectral function of the operator $\widetilde{\mathbb L}$. Also we denote by
$H_0(x,y,\tau)$ the Green function of the operator ${\mathbb L}_0-\tau$ (for 
$\tau\not\in\overline{\mathbb R}_+$).\medskip

Let $\zeta$ be the value of $\tau^{\frac 1{2m}}$ such that $\arg(\zeta)\in[0,\frac {\pi}m[$, while
$z=\exp(\frac {i\pi}m)$. We introduce the matrix ${\cal B}(\zeta)=\big[P_j(iz^{\ell-1}\zeta)\big]_{\ell,j=1}^m$
and define $\Delta(\zeta)=\det({\cal B}(\zeta))$.\medskip

The following statements were proved in \cite{Ko1}, see also \cite{Ko2}. We state them with some redefinitions.

\begin{prop}\label{Kost1}
{\bf A} (\cite[Ch. 1, Sec. 2]{Ko1}). Let the following condition hold:\medskip

${\cal A}$. The matrix ${\cal B}(\zeta)$ is non-degenerate, and the elements of the inverse matrix satisfy the estimate
$\big[{\cal B}^{-1}(\zeta)\big]_{\ell j}= O(|\zeta|^{-k_j})$ as $|\zeta|\to\infty$.\medskip

Then the relation
$$\widetilde \theta(x,y,\lambda)=-\frac{1}{2\pi i}\int\limits_{|\tau|=\lambda}H_0(x,y,\tau)\,d\tau + 
O(\lambda^{-\frac 1{2m}}),\qquad \mbox{as}\quad \lambda\to+\infty,
$$
holds uniformly on $\overline{\mathbb R}_+^2$.\medskip

{\bf B} (\cite[Ch. 1, Sec. 4]{Ko1}). Let condition ${\cal A}$ be satisfied.
Then the relation
$$\theta(x,y,\lambda)=\widetilde \theta(x,y,\lambda)+o(1),\qquad \mbox{as}\quad \lambda\to+\infty,
$$
holds uniformly on $[0, R]^2$.
\end{prop}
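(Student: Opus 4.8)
The plan is to express each spectral function as a contour integral of the corresponding Green function over the circle $|\tau|=\lambda$ and then to estimate the resulting kernel differences along that circle; the only delicate region is a fixed-angle arc around $\tau=\lambda$, where the off-diagonal exponential decay of the Green functions degenerates. Throughout, $\lambda$ is taken to avoid the discrete spectrum of $\mathbb L$ and the (finite, hence eventually irrelevant) negative discrete spectra of $\widetilde{\mathbb L}$ and $\mathbb L_0$; the exceptional values of $\lambda$ are excluded exactly as in the application to the trace formula. For such $\lambda$ the Riesz--Dunford calculus for the self-adjoint semibounded operators $\widetilde{\mathbb L}$ and $\mathbb L$ gives, at the level of kernels,
$$\widetilde\theta(x,y,\lambda)=-\frac1{2\pi i}\oint\limits_{|\tau|=\lambda}\widetilde G(x,y,\tau)\,d\tau,\qquad \theta(x,y,\lambda)=-\frac1{2\pi i}\oint\limits_{|\tau|=\lambda}G(x,y,\tau)\,d\tau,$$
where $\widetilde G$, $G$ are the Green functions of $\widetilde{\mathbb L}-\tau$, $\mathbb L-\tau$, the kernels at $\tau=\lambda$ being limiting boundary values; semiboundedness guarantees that for $\lambda$ large the circle encircles exactly the part of the spectrum below $\lambda$. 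Thus part {\bf A} reduces to the uniform bound $\widetilde G-H_0=O\big(|\tau|^{-\frac{2m+1}{2m}}\big)$ on $|\tau|=\lambda$, which after integration over an arc of length $O(\lambda)$ gives the remainder $O\big(\lambda^{-\frac1{2m}}\big)$, and part {\bf B} reduces to $G-\widetilde G=o(\lambda^{-1})$ on that circle.

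For part {\bf A}, write $\widetilde{\mathbb L}=\mathbb L_0+\mathbb V$ with $\mathbb V=\sum_{k\le 2m-2}\widetilde p_k(x)D^k$, $\mathrm{supp}\,\widetilde p_k\subset[0,R]$. Since the order of $\mathbb V$ is $<2m$ it is $\mathbb L_0$-bounded with relative bound zero, and the second resolvent identity yields $\widetilde G-H_0=-H_0\,\mathbb V\,\widetilde G$ as an identity of kernels, the intermediate variable running over $[0,R]$. The ingredients are: the explicit form of $H_0$ --- with $\zeta=\tau^{\frac1{2m}}$, $\arg\zeta\in[0,\frac\pi m)$, it is a linear combination of the exponentials $e^{iz^{\ell-1}\zeta(x-y)}$ whose coefficients are, under Condition $\mathcal A$, of size $O(|\zeta|^{-(2m-1)})$, so that $H_0=O\big(|\tau|^{-\frac{2m-1}{2m}}\big)$, $D_x^k H_0=O\big(|\tau|^{\frac{k-2m+1}{2m}}\big)$, with exponential off-diagonal decay away from the positive real axis; and the a priori bound $\|(\widetilde{\mathbb L}-\tau)^{-1}\|_{2\to 2}\le \mathrm{dist}(\tau,\mathrm{spec}\,\widetilde{\mathbb L})^{-1}$, upgraded through interior elliptic estimates, Sobolev embedding and (near the positive axis) limiting absorption to the pointwise bound $\widetilde G=O\big(|\tau|^{-\frac{2m-1}{2m}}\big)$, $D_x^k\widetilde G=O\big(|\tau|^{\frac{k-2m+1}{2m}}\big)$, again with exponential off-diagonal decay away from the positive axis. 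On the part of $|\tau|=\lambda$ bounded away from the positive axis, inserting these into $\widetilde G-H_0=-H_0\mathbb V\widetilde G$ gives the required bound: $H_0=O(|\tau|^{-\frac{2m-1}{2m}})$, every term $D_x^k\widetilde G$ with $k\le 2m-2$ is $O(|\tau|^{-\frac1{2m}})$, and the convolution over the bounded set $[0,R]$, one factor of which decays exponentially, contributes a further $|\tau|^{-\frac1{2m}}$, so that the product is $O(|\tau|^{-\frac{2m+1}{2m}})$. On the remaining fixed-angle arc around $\tau=\lambda$, where a few of the exponents $iz^{\ell-1}\zeta$ become nearly imaginary and the off-diagonal decay is lost, one must instead extract the missing factor $|\tau|^{-\frac1{2m}}$ from cancellation: from the spatial convolution (nonstationary phase in the intermediate variable, using the bounded variation of $\widetilde p_k$; the offending mode product is the one whose phase is constant in the intermediate variable, and it is absent on the diagonal and small near it) and from the subsequent integration in $\tau$ along the arc (the phase $\zeta(\tau)(x-y)$ being nonstationary off the diagonal). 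Patching these regimes at the scale $|x-y|\asymp|\tau|^{-\frac1{2m}}$ yields the uniform bound and completes {\bf A}.

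For part {\bf B}, the operators $\mathbb L$ and $\widetilde{\mathbb L}$ coincide on $[0,R]$, so $W:=\mathbb L-\widetilde{\mathbb L}$ has order $\le 2m-2$ with coefficients supported in $(R,\infty)$, and the resolvent identity gives $G-\widetilde G=-\widetilde G\,W\,G$ with the intermediate variable confined to $(R,\infty)$. For $x,y\in[0,R]$ this intermediate variable is separated from both arguments, so away from the positive axis the factor $\widetilde G(x,\cdot,\tau)$ carries $e^{-c|\tau|^{\frac1{2m}}\,\mathrm{dist}(\cdot,[0,R])}$ and the whole expression is exponentially small in $|\tau|^{\frac1{2m}}$, in particular $o(\lambda^{-1})$ on that part of the circle. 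On the fixed-angle arc near $\tau=\lambda$ one uses that the restriction of $G(\cdot,y,\tau)$ to $(R,\infty)$ is a decaying Weyl-type solution, small in a suitable local $L_2$ norm, while $\widetilde G$ obeys an interior $L_2\to L_\infty$ bound on $[0,R]$; feeding this into $G-\widetilde G=-\widetilde G\,W\,G$ again gives $o(\lambda^{-1})$, uniformly on $[0,R]^2$. Integration over the circle then yields $\theta-\widetilde\theta=o(1)$.

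The genuinely hard part, in both {\bf A} and {\bf B}, is the behaviour on the turning-point arc $\{\tau\approx\lambda\}$: there the off-diagonal exponential decay of all three Green functions is lost and must be replaced by oscillatory-integral estimates that are uniform up to the boundary point $x=0$ and uniform in the intermediate integration variable, and for {\bf B} one additionally needs the decay of the Weyl solution $G(\cdot,y,\tau)$ on $(R,\infty)$ with constants independent of $\tau$ on that arc. Away from this arc the argument is routine bookkeeping with the explicit exponentials defining $H_0$ and with standard self-adjoint resolvent and interior elliptic bounds.
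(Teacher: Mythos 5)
The paper does not prove Proposition \ref{Kost1} at all: both parts are quoted (with notational adjustments) from Kostyuchenko's dissertation \cite{Ko1}, see also \cite{Ko2}. So the only question is whether your sketch amounts to a self-contained proof, and it does not — the gaps sit exactly at the places you yourself label ``the genuinely hard part''. Already the starting identity is illegitimate: $\widetilde{\mathbb L}$ has compactly supported lower-order coefficients, hence essential spectrum $[0,+\infty)$, so the circle $|\tau|=\lambda$ passes through its spectrum at $\tau=\lambda$ and there is no kernel-level Riesz--Dunford formula $\widetilde\theta(x,y,\lambda)=-\frac{1}{2\pi i}\oint_{|\tau|=\lambda}\widetilde G(x,y,\tau)\,d\tau$; the spectral projector must be recovered by a Stone-formula/limiting-absorption argument, and making that quantitative at the level of kernels, uniformly on $\overline{\mathbb R}_+^2$, is precisely the content of Kostyuchenko's theorem rather than a harmless reduction. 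For $\mathbb L$ the spectrum is discrete but accumulates at $+\infty$, so ``choosing $\lambda$ to avoid the spectrum'' gives no uniform lower bound on $\mathrm{dist}(\tau,\mathrm{spec}\,\mathbb L)$ along the arc $\tau\approx\lambda$, and the bound $\|(\mathbb L-\tau)^{-1}\|\le \mathrm{dist}(\tau,\mathrm{spec}\,\mathbb L)^{-1}$ is useless there.

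Second, the two estimates to which you reduce the statement --- $\widetilde G-H_0=O\big(|\tau|^{-\frac{2m+1}{2m}}\big)$ and $G-\widetilde G=o(\lambda^{-1})$ uniformly on the circle, including the turning-point arc --- are exactly the crux, and the tools you invoke there are not available. The coefficients $p_k$ are only in $L_{1,loc}$; the bounded-variation hypothesis in the paper concerns $\psi$ (built from $q$), not $\widetilde p_k$, so the ``nonstationary phase in the intermediate variable, using the bounded variation of $\widetilde p_k$'' has nothing to act on. In part {\bf B}, on the arc $\tau\approx\lambda$ the kernel $G(\cdot,y,\tau)$ is governed by eigenvalues of $\mathbb L$ clustering near $\lambda$ and is in no sense small, so asserting $G-\widetilde G=o(\lambda^{-1})$ there is essentially assuming the conclusion; likewise uniformity up to $x=0$ (the only place where condition $\mathcal A$ on ${\cal B}(\zeta)$ can enter) and uniformity over all of $\overline{\mathbb R}_+^2$ in part {\bf A} are asserted, not proved. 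As it stands the proposal is a plausible programme, not a proof; the appropriate course here is the paper's own, namely to cite \cite{Ko1}, \cite{Ko2}.
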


First, we note that he assumptions of Proposition \ref{Kost1} may be weakened.

\begin{lem}\label{nocond}
For $|\zeta|$ large enough, condition $\cal A$ is always satisfied. Thus, this condition in Proposition
\ref{Kost1} can be omitted. 
\end{lem}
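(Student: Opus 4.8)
The plan is to isolate the growth in $\zeta$ by factoring $\mathcal{B}(\zeta)$ into a diagonal matrix times a matrix with a non-degenerate limit. Write $P_j(w) = a_j w^{k_j} + (\text{terms of degree} < k_j)$, where $a_j\neq 0$ since $\deg P_j = k_j$. Substituting $w = iz^{\ell-1}\zeta$ and pulling the scalar $a_j i^{k_j}\zeta^{k_j}$ out of the $j$-th column, one gets
\[
\mathcal{B}(\zeta) = M(\zeta)\,D(\zeta),\qquad D(\zeta) := \operatorname{diag}\!\big(a_1 i^{k_1}\zeta^{k_1},\dots,a_m i^{k_m}\zeta^{k_m}\big),
\]
with $M_{\ell j}(\zeta) = z^{(\ell-1)k_j} + O(|\zeta|^{-1})$; the remainder is uniform in $\ell,j$ because $|z| = 1$ and the coefficients of the finitely many polynomials $P_j$ are fixed. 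Hence $M(\zeta)\to M_0 := \big[(z^{k_j})^{\ell-1}\big]_{\ell,j=1}^m$ entrywise as $|\zeta|\to\infty$.

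The next step is to note that $M_0$ is the Vandermonde matrix built on the nodes $x_j := z^{k_j}$. Since $z = \exp(i\pi/m)$ has multiplicative order exactly $2m$ and $0\le k_1 < k_2 <\dots< k_m\le 2m-1$, we have $0 < k_{j'}-k_j < 2m$ for $j < j'$, so the $x_j$ are pairwise distinct and $\det M_0 = \prod_{j<j'}(x_{j'}-x_j)\neq 0$. By continuity of the determinant and of matrix inversion, for $|\zeta|$ large enough $M(\zeta)$ is invertible with $M(\zeta)^{-1}\to M_0^{-1}$, in particular $\big[M(\zeta)^{-1}\big]_{\ell j} = O(1)$ uniformly; and $\det\mathcal{B}(\zeta) = \det M(\zeta)\cdot\prod_{j=1}^m a_j i^{k_j}\zeta^{k_j}\neq 0$ for all sufficiently large $|\zeta|$, so $\mathcal{B}(\zeta)$ is non-degenerate there.

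Finally, from $\mathcal{B}(\zeta)^{-1} = D(\zeta)^{-1}M(\zeta)^{-1}$ and the fact that $D(\zeta)^{-1}$ is diagonal with $\ell$-th entry $(a_\ell i^{k_\ell}\zeta^{k_\ell})^{-1}$, one reads off
\[
\big[\mathcal{B}(\zeta)^{-1}\big]_{\ell j} = \big(a_\ell i^{k_\ell}\zeta^{k_\ell}\big)^{-1}\big[M(\zeta)^{-1}\big]_{\ell j} = O\!\big(|\zeta|^{-k_\ell}\big),
\]
i.e.\ the $\ell$-th row of the inverse decays like $|\zeta|^{-k_\ell}$, which is precisely the decay asserted in condition $\mathcal{A}$. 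Since condition $\mathcal{A}$ concerns only the regime $|\zeta|\to\infty$, the above shows it holds automatically, and it may be dropped from the hypotheses of Proposition \ref{Kost1}.

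I do not expect a genuine obstacle here. The one place that uses the structure of the problem is the distinctness of the nodes $z^{k_j}$, which rests on the normalization $0\le k_1<\dots<k_m\le 2m-1$ together with the precise value $z = \exp(i\pi/m)$, and is exactly what makes the limiting Vandermonde matrix invertible; the only routine care needed is to check that the $O(|\zeta|^{-1})$ corrections to $M_0$ are uniform over the entries, so that $M(\zeta)^{-1}$ stays bounded as $|\zeta|\to\infty$.
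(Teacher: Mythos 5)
Your proof is correct and is essentially the paper's argument in different packaging: the paper observes that $\Delta(\zeta)$ is a polynomial of degree $\varkappa$ whose leading coefficient is the Vandermonde determinant at the distinct nodes $z^{k_1},\dots,z^{k_m}$ and bounds the cofactor degrees by $\varkappa-k_j$ before applying Cramer, which is exactly your factorization $\mathcal{B}(\zeta)=M(\zeta)D(\zeta)$ with $M(\zeta)$ tending to that Vandermonde matrix. The decay you obtain, $O(|\zeta|^{-k_\ell})$ governed by the row index of the inverse, is precisely what the paper's Cramer step yields as well (the indexing in the stated condition $\mathcal{A}$ differs only by the adjugate-transpose convention), so there is no gap.
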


\begin{proof}
We claim that $\Delta(\zeta)$ is a $\varkappa$-degree polynomial of $\zeta$. Indeed, the columns of ${\cal B}(\zeta)$ 
are $k_j$-degree polynomials, while the highest degrees cannot reduce since the corresponding coefficient
is the Vandermonde determinant $\det({\mathbb W}(z^{k_1},\dots,z^{k_m}))\ne0$. Thus, $\Delta(\zeta)\ne0$ for
sufficiently large $|\zeta|$.

Further, the cofactor of $b_{\ell j}$ is evidently a polynomial of $\zeta$, and its degree does not exceed
$\varkappa-k_j$. The statement now follows from the Cramer formula.
\end{proof}

\begin{proof}[Proof of Theorem \ref{main}] Note that the left-hand side of (\ref{prosto}) can be rewritten as follows,
see~\cite[proof of Theorem 1]{KP3}:
$${\cal S}_1=\lim\limits_{\lambda\to+\infty}
\int\limits_0^{+\infty}q(x)\cdot\Big(\theta(x,x,\lambda)-\frac {\lambda^{\frac 1{2m}}}{\pi}\Big)\,dx.
$$
By Lemma \ref{nocond} this formula can be transformed to
\begin{equation}\label{Green}
{\cal S}_1=\lim\limits_{\lambda\to+\infty}\int\limits_0^{+\infty}q(x)\cdot
\Big(-\frac{1}{2\pi i}\int\limits_{|\tau|=\lambda}H_0(x,x,\tau)\,d\tau-\frac {\lambda^{\frac 1{2m}}}{\pi}\Big)\,dx.
\end{equation}

Now we use the explicit formula for $H_0$ obtained in \cite[Lemma 1]{KP1}. We state them with some redefinitions.
$$H_0(x,x,\tau)=\frac{i}{2m \zeta^{2m-1}}\cdot\sum\limits_{\alpha=1}^m z^{\alpha-1}\cdot
\Big(1-\frac 1{\Delta(\zeta)}\cdot {\sum\limits_{\beta=1}^m
\exp(i(z^{\alpha-1}+z^{\beta-1})\,x\zeta)\cdot\Delta_{\alpha\beta}(\zeta)}\Big),
$$
where the determinant $\Delta_{\alpha\beta}(\zeta)$ is obtained from $\Delta(\zeta)$ if we substitute
$P_j(-iz^{\alpha-1}\zeta)$ for $P_j(iz^{\beta-1}\zeta)$ in $\beta$-th line.
Surely, for this formula we need $\Delta(\zeta)\ne 0$ but this is the case for $|\tau|$ large enough.

We change the variable $\tau=\zeta^{2m}$ in the inner integral in (\ref{Green}). It is easy to see that
$$-\frac 1{2\pi i}\int\limits_{|\tau|=\lambda}\frac{i\,d\tau}{2m\zeta^{2m-1}}\sum\limits_{\alpha=1}^m z^{\alpha-1}
=-\int\limits_{\Gamma_\lambda}\frac {d\zeta}{\pi(1-z)}=\frac {\lambda^{\frac{1}{2m}}}{\pi}$$ 
(here $\Gamma_\lambda$ is the arc of the circle $\{\zeta=\lambda^{\frac{1}{2m}}e^{i\phi}:\phi \in ]0,\frac{\pi}{m}[ \}$).
Therefore (\ref{Green}) is rewritten as follows\footnote{In \cite[Theorem 2]{SKP} a formula similar to (\ref{nado})
was obtained under additional assumption: $\Delta(\zeta)\ne0$ for $\tau\in\mathbb R\setminus\{0\}$. This formula contains the real part of the integral over the segment $[0,\lambda^{\frac{1}{2m}}]$ instead of integral over
$\Gamma_\lambda$. In fact, this formula is correct only under a stronger assumption: $\Delta(\zeta)\ne0$ for $\tau\ne0$. In this case \cite[formula (0.5)]{SKP} is reduced to (\ref{nado}) by the Cauchy theorem.}:
\begin{equation}\label{nado}
{\cal S}_1=\frac{1}{2\pi}\lim\limits_{\lambda\to +\infty} \int\limits_0^{+\infty}q(x)\cdot\!\!
\int\limits_{\Gamma_\lambda}\sum\limits_{\alpha,\beta=1}^m
z^{\alpha-1}\exp(i(z^{\alpha-1}+z^{\beta-1})\,x\zeta)\cdot
\frac {\Delta_{\alpha\beta}(\zeta)}{\Delta(\zeta)}\ d\zeta\,dx.
\end{equation}

We denote ${\mathbb B}_{\alpha\beta}=\lim\limits_{\zeta\to\infty}\frac {\Delta_{\alpha\beta}(\zeta)}{\Delta(\zeta)}$ 
and claim that 
\begin{equation}\label{nado1}
{\cal S}_1=\frac{1}{2\pi}\lim\limits_{\lambda\to +\infty} \int\limits_0^{+\infty}q(x)\cdot\!\!
\int\limits_{\Gamma_\lambda}\sum\limits_{\alpha,\beta=1}^m
z^{\alpha-1}\exp(i(z^{\alpha-1}+z^{\beta-1})\,x\zeta)\cdot{\mathbb B}_{\alpha\beta}\ d\zeta\,dx.
\end{equation}
Indeed, since $\Delta_{\alpha\beta}$ is a polynomial of $\zeta$ and its degree does not exceed $\varkappa$,
the relation
\begin{equation}\label{BBB}
\frac {\Delta_{\alpha\beta}(\zeta)}{\Delta(\zeta)}={\mathbb B}_{\alpha\beta}+O(\zeta^{-1})
\end{equation}
holds as $|\zeta|\to\infty$. Further, for any $1\le\alpha,\beta \le m$ the inequality 
$0\le \arg(z^{\alpha-1}+z^{\beta-1})\le \frac{m-1}{m}\pi$ holds true. Therefore, the integrals
$${\cal I}_{\alpha\beta}(x)=\int\limits_{\Gamma_\lambda}z^{\alpha-1}\exp(i(z^{\alpha-1}+z^{\beta-1})\,x\zeta)\cdot
\Big(\frac {\Delta_{\alpha\beta}(\zeta)}{\Delta(\zeta)}-{\mathbb B}_{\alpha\beta}\Big)\ d\zeta
$$
are bounded by (\ref{BBB}) uniformly with respect to $x\ge0$ and tend to zero as $x>0$, $\lambda\to\infty$, by
the Jordan Lemma. By the Lebesgue Dominated Convergence Theorem, $\lim\limits_{\lambda\to +\infty}
\int\limits_0^{+\infty}q(x)\cdot{\cal I}_{\alpha\beta}(x)\,dx=0$, and (\ref{nado1}) follows.

We calculate the inner integral in (\ref{nado1}) and rewrite the formula for ${\cal S}_1$ as follows:
\begin{equation}\label{g}
{\cal S}_1=\frac 1{2\pi i}\cdot\lim_{s\rightarrow \infty} \int\limits_{0}^{\infty} q(x) g(sx) s\,dx,
\end{equation}
where
$$g(y)=\sum\limits_{\alpha,\beta=1}^m {\mathbb P}_{\beta\alpha}{\mathbb B}_{\alpha\beta}\ 
\frac{\exp(i(z^{\alpha}+z^{\beta})\,y)-\exp(i(z^{\alpha-1}+z^{\beta-1})\,y)}{y},
$$
while
$${\mathbb P}_{\beta\alpha}= \frac{z^{\alpha-1}}{z^{\alpha-1} + z^{\beta-1}}=\frac{1}{1+z^{\beta-\alpha}}.$$

To pass to the limit in (\ref{g}) in terms with $\alpha=\beta=1$ and $\alpha=\beta=m$, one uses the Riemann
localization principle, see, e.g., \cite[Ch. I, Sec. 33]{B}, and the Vall\`ee-Poussin test of convergence,
see \cite[Ch. III, Sec. 3]{B}. Other terms, after integration by parts, are covered by the Lebesgue Theorem, since
the exponents have negative real parts. Thus we arrive at\footnote{Note that passage from (\ref{g}) to 
(\ref{int}) follows also from \cite[Lemma 2]{KP3}.}
\begin{equation}\label{int}
{\cal S}_1=\frac 1 {2\pi i}\cdot\psi(0+)\int\limits_{0}^{\infty}g(y)\,dy=
-\frac{\psi(0+)}{2m}\cdot {\bf Sp}({\mathbb P}{\mathbb B})
\end{equation}
(the last equality follows from \cite[3.434.2]{GR}).\medskip

Note that the entries of matrix $\mathbb B$ are quotients of determinants composed of the leading coefficients
of polynomials that are entries of determinants $\Delta_{\alpha\beta}(\zeta)$ and $\Delta(\zeta)$. Direct calculation
via the Cramer formula gives
\begin{equation}\label{bb}
{\mathbb B}={\mathbb W}\cdot{\rm diag}[(-1)^{k_1},\dots,(-1)^{k_m}]\cdot {\mathbb W}^{-1}
\end{equation}
(we recall that ${\mathbb W}=({\mathbb W}_{\ell j})$ is the Vandermond matrix generated by the numbers $w_j=z^{k_j}$,
$j=1,\dots, m$).

Formula (\ref{bb}) shows that if $e^{(j)}=[1,w_j,\dots,w_j^{m-1}]^T$ is $j$-th column of matrix ${\mathbb W}$ then 
${\mathbb B}e^{(j)}=(-1)^{k_j}e^{(j)}$. Thus, vectors $e^{(j)}$ form the eigen basis of ${\mathbb B}$.\medskip

Next, by the geometrical progression sum formula, we rewrite the matrix ${\mathbb P}$ as follows:
\begin{equation}\label{progr}
{\mathbb P}=\lim\limits_{\rho \rightarrow 1-}\sum\limits_{n=0}^{\infty}(-1)^n \rho^n\,\varphi_n\overline{\varphi}^T_n,
\end{equation}
where $\varphi_n=(1,z^n,\dots,..,z^{(m-1)n})^T$.\medskip

Denote by $\mathbb K$ the set $\{k_j+2mp\,|\ 1\leq j\leq m, \ \ p\in\mathbb Z_+ \}$.

\begin{lem}\label{evenodd}
If $n\in \mathbb K$ then ${\bf Sp}(\varphi_n\overline{\varphi}^T_n{\mathbb B})=(-1)^n m$. 
If $n \notin \mathbb K$ then ${\bf Sp}(\varphi_n\overline{\varphi}^T_n{\mathbb B})=(-1)^{n+1}m.$ 
\end{lem}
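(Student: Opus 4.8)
The plan is to read off the trace directly from the spectral decomposition (\ref{bb}) of $\mathbb B$. Since $\varphi_n\overline\varphi_n^T$ has rank one, ${\bf Sp}(\varphi_n\overline\varphi_n^T{\mathbb B})=\overline\varphi_n^T{\mathbb B}\varphi_n$. Recall that $\{e^{(j)}\}$ is an eigenbasis of $\mathbb B$ with $\mathbb Be^{(j)}=(-1)^{k_j}e^{(j)}$, and that $e^{(j)}$, $\varphi_n$ are the ``evaluation vectors'' $(1,\xi,\dots,\xi^{m-1})^T$ at $\xi=w_j=z^{k_j}$ and at $\xi=z^n$; interpolation of the monomials $1,t,\dots,t^{m-1}$ at the nodes $w_1,\dots,w_m$ therefore gives $\varphi_n=\sum_{j=1}^m L_j(z^n)\,e^{(j)}$, with $L_j(t)=\prod_{i\ne j}\frac{t-w_i}{w_j-w_i}$. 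Applying $\mathbb B$ and pairing with $\overline\varphi_n$ yields
$$
{\bf Sp}(\varphi_n\overline\varphi_n^T{\mathbb B})=\sum_{j=1}^m(-1)^{k_j}L_j(z^n)\,D_j(n),\qquad
D_j(n)=\sum_{p=0}^{m-1}\Big(\tfrac{w_j}{z^n}\Big)^p .
$$

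If $n\in\mathbb K$, then $n\equiv k_{j_0}\ (\mathrm{mod}\ 2m)$ for a unique $j_0$, so $z^n=w_{j_0}$, the vector $\varphi_n=e^{(j_0)}$ is itself an eigenvector of $\mathbb B$, and using $\|\varphi_n\|^2=m$ and $(-1)^{k_{j_0}}=(-1)^n$ we get the value $(-1)^n m$ at once. This is the easy case.

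If $n\notin\mathbb K$, then $z^n\ne w_j$ for every $j$, hence $D_j(n)=\dfrac{(w_j/z^n)^m-1}{w_j/z^n-1}$. Here I would use $w_j^m=(z^m)^{k_j}=(-1)^{k_j}$ and $(z^n)^m=(-1)^n$, which give $(w_j/z^n)^m=(-1)^{k_j+n}$; thus $D_j(n)=0$ when $k_j+n$ is even, $D_j(n)=\dfrac{2z^n}{z^n-w_j}$ when $k_j+n$ is odd, and for those $j$ one also has $(-1)^{k_j}=(-1)^{n+1}$. The statement then collapses to the rational-function identity
$$
2z^n\sum_{j:\,k_j+n\ \mathrm{odd}}\frac{L_j(z^n)}{z^n-w_j}=m .
$$
To prove it I would replace the index condition ``$k_j+n$ odd'' by the weight $\tfrac12\bigl(1-(-1)^nw_j^{m}\bigr)$, which is its indicator because $w_j^m=(-1)^{k_j}$; this splits the sum into $\sum_j\frac{L_j(z^n)}{z^n-w_j}$ and $\sum_j\frac{w_j^m L_j(z^n)}{z^n-w_j}$. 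Writing $\Pi(t)=\prod_j(t-w_j)$, so that $L_j(t)=\Pi(t)/\bigl((t-w_j)\Pi'(w_j)\bigr)$, and differentiating the partial-fraction identities $\sum_j\frac1{\Pi'(w_j)(t-w_j)}=\frac1{\Pi(t)}$ and $\sum_j\frac{w_j^m}{\Pi'(w_j)(t-w_j)}=\frac{t^m}{\Pi(t)}-1$, these two sums become $\frac{\Pi'(t)}{\Pi(t)}$ and $\frac{t^m\Pi'(t)-mt^{m-1}\Pi(t)}{\Pi(t)}$ evaluated at $t=z^n$. Substituting $(z^n)^m=(-1)^n$ makes the $\Pi'$-terms cancel and leaves exactly $\frac{m}{2z^n}$, which is the required identity.

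The main obstacle is this last step: one must hit on the analytic expression $\tfrac12\bigl(1-(-1)^nw_j^m\bigr)$ for the indicator of $\{j:k_j+n\text{ odd}\}$ so that the sum becomes accessible to partial-fraction calculus, and then keep track of the cancellation of the logarithmic-derivative terms. Apart from that, the argument is just bookkeeping with roots of unity, using repeatedly $z^{2m}=1$, $z^m=-1$, and the fact that $\varphi_n$ and $e^{(j)}$ depend on their indices only modulo $2m$.
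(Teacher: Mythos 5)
Your proof is correct, but in the crucial case $n\notin\mathbb K$ it follows a genuinely different route from the paper's. Both arguments begin the same way: ${\bf Sp}(\varphi_n\overline{\varphi}^T_n{\mathbb B})=({\mathbb B}\varphi_n,\varphi_n)$, the eigenbasis $e^{(j)}$ coming from (\ref{bb}) is the main tool, and in the case $n\in\mathbb K$ both simply note that $\varphi_n=e^{(j_0)}$ is itself an eigenvector with eigenvalue $(-1)^{k_{j_0}}=(-1)^n$. For $n\notin\mathbb K$, however, the paper never computes the coordinates of $\varphi_n$ in the eigenbasis: it splits $\mathbb C^m=U\dotplus V$ into the $(-1)$- and $(+1)$-eigenspaces of $\mathbb B$, observes by the same geometric-sum computation that underlies your vanishing $D_j(n)=0$ (for $k_j+n$ even) that $\varphi_n\perp U$ when $n$ is odd and $\varphi_n\perp V$ when $n$ is even, and then the one-line Proposition \ref{hilb} gives $({\mathbb B}\varphi_n,\varphi_n)=\pm|\varphi_n|^2=(-1)^{n+1}m$ with no further work. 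You instead make the expansion explicit via Lagrange interpolation, $\varphi_n=\sum_j L_j(z^n)e^{(j)}$, and are then forced to prove the identity $2z^n\sum_{j:\,k_j+n\ \mathrm{odd}}L_j(z^n)/(z^n-w_j)=m$; your proof of it is correct: the weight $\tfrac12\bigl(1-(-1)^nw_j^m\bigr)$ is indeed the indicator of the parity condition since $w_j^m=(-1)^{k_j}$, the differentiated partial-fraction expansions of $1/\Pi$ and $t^m/\Pi$ give $\Pi'(t)/\Pi(t)$ and $\bigl(t^m\Pi'(t)-mt^{m-1}\Pi(t)\bigr)/\Pi(t)$ as you state, these are legitimate at $t=z^n$ precisely because $n\notin\mathbb K$ forces $z^n\ne w_j$, and the substitution $t^m=(-1)^n$ does cancel the logarithmic-derivative terms and leaves $m/(2z^n)$. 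The trade-off: the paper's orthogonality-plus-Proposition-\ref{hilb} argument avoids all coefficient bookkeeping and the partial-fraction lemma, so it is shorter and softer, while your computation is heavier but exhibits the quadratic form explicitly term by term (the products $L_j(z^n)D_j(n)$), information the soft argument never produces.
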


To prove this lemma we need an obvious proposition.

\begin{prop}\label{hilb}
Let $u,v$ be elements of Hilbert space and let $a,b\in\mathbb C$. 
Then $(au+bv,u)=0$ implies $(au+bv,-au+bv)=|au+bv|^2$.
\end{prop}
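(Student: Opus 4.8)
The plan is to reduce the claim to a one-line computation by abbreviating $w = au+bv$. The only observation needed is the algebraic rewriting
$$-au+bv = (au+bv) - 2au = w - 2au,$$
which displays the second argument of the target inner product as $w$ minus a multiple of $u$. Expanding $(w,\,w-2au)$ by linearity gives $(w,w) - (w,\,2au)$, and the cross term $(w,\,2au)$ is a scalar multiple of $(au+bv,u) = (w,u)$, which vanishes by hypothesis. Hence $(au+bv,-au+bv) = (w,w) = |au+bv|^2$, which is exactly the assertion.

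Geometrically I would describe this as follows: the hypothesis says $au+bv$ is orthogonal to $u$, and therefore also to $au$; writing $p=au$ and $q=bv$ so that $w=p+q$, the vector $-au+bv = q-p = (p+q)-2p$ is the reflection of $w$ in the hyperplane $\{u\}^{\perp}$, and $(p+q,\,(p+q)-2p) = |p+q|^2 - 2(p+q,p) = |p+q|^2$ because $(p+q)\perp p$. There is essentially no obstacle here — the statement is a polarization-type identity and the author is right to call it obvious. The only point deserving a moment's care is the bookkeeping of the complex conjugate of $a$ under whichever convention is adopted for the Hilbert-space inner product, but since the cross term is annihilated outright by the hypothesis and the surviving term is simply $(w,w)$, this conjugation never affects the outcome.
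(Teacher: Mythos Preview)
Your proof is correct and matches the paper's own treatment: the paper simply declares the proposition ``obvious'' and gives no argument, so your one-line computation $w=au+bv$, $-au+bv=w-2au$, $(w,w-2au)=(w,w)-c(w,u)=|w|^2$ is exactly the intended verification. Your remark that the conjugation convention is irrelevant because the cross term vanishes identically is also apt.
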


\begin{proof}[Proof of Lemma \ref{evenodd}] First,
$${\bf Sp}(\varphi_n\overline{\varphi}^T_n{\mathbb B})= 
{\bf Sp}(\overline{\varphi}^T_n {\mathbb B}\varphi_n)=({\mathbb B}\varphi_n,\varphi_n).$$
We consider two subspaces
$$U=Lin\{e^{(j)}\,|\ k_j \equiv 1 \pmod 2\};\qquad V=Lin\{e^{(j)}\,|\ k_j \equiv 0 \pmod 2\}.$$ 

Note that $U\dotplus V=\mathbb C^m$, since the vectors $e^{(j)}$ form a basis.
Thus, there exists a decomposition $\varphi_n=u+v$ with $u \in U$, $v \in V$.
Further, we have shown that ${\mathbb B}\big|_V$ is an identity operator while ${\mathbb B}\big|_U$ 
multiplies by $-1$. This implies ${\mathbb B}\varphi_n=-u+v$.

{\bf 1}. Let $n \in \mathbb K$. Then $\varphi$ is an eigenvector of ${\mathbb B}$ corresponding to the eigenvalue
$(-1)^n$. Therefore $({\mathbb B}\varphi_n,\varphi_n)=(-1)^n (\varphi_n,\varphi_n)=(-1)^n m$.

{\bf 2}. Let $n \notin \mathbb K$ and $n \equiv 1 \pmod 2$. Then direct calculation shows that
$\varphi \perp U$. By Proposition \ref{hilb}, $({\mathbb B}\varphi_n,\varphi_n)=|\varphi_n|^2=m$.

{\bf 3}. In a similar way, if $n \notin \mathbb K$ and $n \equiv 0 \pmod 2$ then $\varphi \perp V$. 
By Proposition \ref{hilb}, $-({\mathbb B}\varphi_n,\varphi_n)=|\varphi_n|^2=m$.
\end{proof}

To complete the proof of Theorem we define the set $K=\{k_1, ..., k_m\}$. Then formula (\ref{progr}) and
Lemma \ref{evenodd} imply
\begin{multline*}
{\bf Sp}({\mathbb P}{\mathbb B})=\lim\limits_{\rho\rightarrow 1-} \sum \limits_{n=0}^\infty (-1)^n \rho^n 
{\bf Sp}(\varphi_n\overline{\varphi}^T_n{\mathbb B})=\\
=m\lim\limits_{\rho\rightarrow 1-}\Big( \sum \limits_{n\in \mathbb K} (-1)^n\rho^n(-1)^n + 
\sum \limits_{0\leq n\notin \mathbb K} (-1)^n \rho^n (-1)^{n+1}\Big)\\
=m\lim\limits_{\rho\rightarrow 1-}\Big(\sum \limits_{j\in K} \sum\limits_{p=0}^\infty \rho^{j+2mp} - 
\sum \limits_{0\leq j<2m,j\notin K}\sum\limits_{p=0}^\infty \rho^{j+2mp}\Big)\\
=m\lim\limits_{\rho\rightarrow 1-}\Big(\sum \limits_{j\in K} \frac{\rho^{j}}{1-\rho^{2m}} - \sum \limits_{0\leq j<2m,j\notin K}\frac{\rho^{j}}{1-\rho^{2m}}\Big).
\end{multline*}
Since the set $K$ contains exactly $m$ elements, we get
\begin{multline}\label{sled}
{\bf Sp}({\mathbb P}{\mathbb B})=m\lim\limits_{\rho\rightarrow 1-}\Big(\sum \limits_{j\in K}
\frac{\rho^{j}-1}{1-\rho^{2m}} - \sum \limits_{0\leq j<2m,j\notin K}\frac{\rho^{j}-1}{1-\rho^{2m}}\Big)=\\
=m\Big(\sum \limits_{j\in K}\frac{-j}{2m}-\sum \limits_{0\leq j<2m,j\notin K}\frac{-j}{2m}\Big) =
\frac{1}{2}\sum \limits_{0\leq j<2m}j - \sum \limits_{j\in K} j=\\
=\frac{m(2m-1)}{2}-\sum \limits_{j=1}^m k_j.
\end{multline}

Theorem follows immediately from (\ref{int}) and (\ref{sled}).
\end{proof}

We are grateful to I.A. Sheipak and A.A. Shkalikov who gave us the opportunity to learn the text of A.G.
Kostyuchenko's dissertation; to V.A. Kozlov and A.N. Podkorytov for useful discussions; to A.S. Pechentsov
and A.I. Kozko who provided us with the texts of papers \cite{KP3} and \cite{KP2}.


\begin{thebibliography}{AFT}
 
\bibitem{L}
B.M. Levitan, {\em On the asymptotic behavior of a spectral function and on expansion in eigenfunctions of a self-adjoint differential equation of second order},
Izv. Akad. Nauk SSSR. Ser. Mat. {\bf I}. V.17 (1953), 331--364; {\bf II}. V.19 (1955), 33--58 (Russian). 

\bibitem{ZNS}
P.B. Zatitskiy, A.I. Nazarov, D.M. Stolyarov, {\em Following traces of V.A. Sadovnichii}, 
SPbMS El. Prepr. Archive. N~2010-04. 5p. (Russian).

\bibitem{KP1}
A.I. Kozko, A.S. Pechentsov, {\em Spectral function and regularized traces of higher-order singular differential
operators}, Mat. Zametki, V.83 (2008), N1, 39--49  (Russian); English transl.: Math. Notes V.83 (2008), N1-2, 37--47. 

\bibitem{SKP}
V.A. Sadovnichii, A.S. Pechentsov, A.I. Kozko, {\em Regularized traces of singular differential operators},
Dokl. RAN, V.427 (2009), N4, 461--465 (Russian).

\bibitem{KP3}
A.I. Kozko, A.S. Pechentsov, {\em Regularized traces of $2m$-order singular differential operators},
Cont. probl. math. mech., V.3 (2009), N3, MSU, 45--57.

\bibitem{Go}
L. G\r{a}rding, {\em Eigenfunction expansions connected with elliptic differential operators}, C.R. 12 
Congress Math. Scand., Lund, 1953. 44--55.

\bibitem{Ko1}
A.G. Kostyuchenko, {\em On some spectral properties of differential operators}, Sci.D. Dissertation, Moscow, MSU, 1966.

\bibitem{Ko2}
A.G. Kostyuchenko, {\em Asymptotic behavior of the spectral function of a singular differential operator of 
order $2m$}, Dokl. Akad. Nauk SSSR, V.168 (1966), N2, 276--279 (Russian); English transl.: Soviet Math. Dokl. V.7
(1966), 632--635.

\bibitem{KP2}
A.I. Kozko, A.S. Pechentsov, {\em Spectral function of $2m$-order singular operator}, Izv. RAN, Ser. Mat. V.74 
(2010), N6, 107--126 (Russian); English transl.: Izv. Mathematics, V.74 (2010), N6, 1205--1224.

\bibitem{B}
N.K. Bari, {\em Trigonometric series}, Moscow, FML, 1961 (Russian); English transl.: 
{\em A treatise on trigonometric series}, V. I, II, New York: Pergamon Press, 1964. 

\bibitem{GR}
I.S. Gradshteyn, I.M. Ryzhik, {\em Tables of integrals, sums, series and products}, 5th ed. Moscow, Nauka, 1971 
(Russian); English transl.: {\em Table of integrals, series, and products}. Corr. and enl. ed. by Alan Jeffrey. 
New York -- London -- Toronto: Academic Press (Harcourt Brace Jovanovich, Publishers), 1980.

\end{thebibliography}
\end{document}